\newcommand{\sumprime}{\if@display\sideset{}{'}\sum%
            \else\sum'\fi}
\begin{document}

\numberwithin{equation}{section}

\newtheorem{theorem}{Theorem}[section]
\newtheorem{proposition}[theorem]{Proposition}
\newtheorem{conjecture}[theorem]{Conjecture}
\def\theconjecture{\unskip}
\newtheorem{corollary}[theorem]{Corollary}
\newtheorem{lemma}[theorem]{Lemma}
\newtheorem{observation}[theorem]{Observation}
\newtheorem{definition}{Definition}
\numberwithin{definition}{section} 
\newtheorem{remark}{Remark}
\def\theremark{\unskip}
\newtheorem{kl}{Key Lemma}
\def\thekl{\unskip}
\newtheorem{question}{Question}
\def\thequestion{\unskip}
\newtheorem{example}{Example}
\def\theexample{\unskip}
\newtheorem{problem}{Problem}

\thanks{The first author is supported by National Natural Sciense Foundation of China, No. 12271101. The second author is supported by China Postdoctoral Science Foundation, No. 2024M750487.}

\address [Bo-Yong Chen] {School of Mathematical Sciences, Fudan University, Shanghai, 200433, China}
\email{boychen@fudan.edu.cn}

\address [Yuanpu Xiong] {School of Mathematical Sciences, Fudan University, Shanghai, 200433, China}
\email{ypxiong@fudan.edu.cn}

\title{An estimate of the Bergman distance on  Riemann surfaces}
\author{Bo-Yong Chen and Yuanpu Xiong}

\date{}

\maketitle

\begin{center}
\emph{In memory of Marek Jarnicki}
\end{center}

\begin{abstract}
Let $M$ be a hyperbolic Riemann surface with the first eigenvalue $\lambda_1(M)>0$.  Let $\rho$ denote the distance from a fixed point  $x_0\in{M}$ and $r_x$ the injectivity radius at $x$.  We show that there  exists a numerical constant $c_0>0$ such that if
$
r_x\ge c_0 \lambda_1(M)^{-3/4} \rho(x)^{-1/2}
$
holds outside some compact set of $M$, then the Bergman distance verifies
$
d_B(x,x_0) \gtrsim  \log [1+\rho(x)].
$
\newline
\newline
\noindent Keywords: Bergman metric, hyperbolic metric, the first eigenvalue, injectivity radius
\newline
\newline
\noindent 2020 Mathematics Subject Classification: 32F45, 30F45.
\end{abstract}

\section{Introduction}
Completeness of the Bergman metric on complex manifolds,  initiated by the celebrated work of Kobayashi \cite{Kobayashi},  has been investigated by various authors in recent decades.  For more information on this matter,  we refer the reader to the comprehensive  book of Jarnicki-Pflug \cite{JPBook} or the survey article \cite{ChenSurvey} and the references therein.   There are also precise estimates of the Bergman distance on certain bounded hyperconvex domains in $\mathbb C^n$ (cf.  \cite{DO95},  \cite{Blocki05},  \cite{Chen17}) and K\"ahlerian Cartan-Hadamard manifolds (cf.  \cite{CZ02}).    

The goal of this note is to give an estimate of the Bergman distance in terms of hyperbolic geometry for  noncompact Riemann surfaces.  More precisely,  consider a noncompact {\it hyperbolic} Riemann surface $M$,  that is,  the universal covering of $M$ is the unit disc $\mathbb D$. The (Poincar\'e) hyperbolic metric on $\mathbb D$ descends to  the hyperbolic metric $ds^2_{\rm hyp}$ on $M$,  whose  Gauss curvature equals to $-1$. The geometry associated to  $ds^2_{\rm hyp}$ is called the hyperbolic geometry.

Following Kobayashi \cite{Kobayashi},  we define  ${\mathcal H}(M)$ to be the Hilbert space of holomorphic differentials $f$ on $M$ satisfying
\[
\|f\|^2:=\frac{i}2 \int_M f\wedge \bar{f}<\infty.
\]
Let $\{h_j\}_{j=1}^\infty$ be a complete orthonormal basis of ${\mathcal H}(M)$. The Bergman kernel $K_M$ of $M$ is given by
\[
K_M(x,y)=\sum_j h_j(x)\otimes \overline{h_j(y)}.
\]
In case $M$ is nonparabolic\footnote{We do not use "hyperbolic" as an antonym to "parabolic".},  i.e.,  it carries  the (negative) Green function,  it also carries the Bergman metric,  which is an invariant K\"ahler metric given by 
\[
ds_B^2:=\frac{\partial^2 \log K_M^\ast(z,z)}{\partial z \partial\bar{z}} dz\otimes d\bar{z},
\]
where 
$
K_M(z,z)=K_M^\ast(z,z)dz\otimes d\bar{z}
$
in local coordinates (compare \cite{CZ02}).  

To state our result,  let us first recall two fundamental concepts in (hyperbolic) geometry.   
Let $d$ be the distance function induced by $ds^2_{\rm hyp}$ and $B_r(x)$ the geodesic ball centred at $x$ with radius $r$. The\/ {\it injectivity radius}\/ at $x\in{M}$ is defined to be
\[
r_x:=\frac12 \inf_{\gamma\in\Gamma\setminus\{1\}}d(\widetilde{x},\gamma\widetilde{x}),
\]
which is independent of the choice of $\widetilde{x}\in\varpi^{-1}(x)$.  Here $\Gamma$ is a Fuchsian group so that $M=\mathbb D/\Gamma$ and $\varpi:\mathbb D\rightarrow M$ is the universal covering map.  Let $\Delta$ denote the (real) Laplace operator associated to $ds^2_{\rm hyp}$.  The bottom of the spectrum (or the first eigenvalue) of $-\Delta$ is given by
\[
\lambda_1(M)=\inf \left\{\frac{\int_M |\nabla\phi|^2 dV}{\int_M |\phi|^2 dV}:\phi\in C_0^\infty(M)\backslash \{0\}\right\}.
\]
It is a classical fact that $M$ is nonparabolic provided $\lambda_1(M)>0$ (cf.  \cite{Grigoryan}).

Our main result is given as follows.

\begin{theorem}\label{th:Main}
Let $M$ be a hyperbolic Riemann surface with $\lambda_1(M)>0$. Fix $x_0\in{M}$ and define $\rho(x):=d(x,x_0)$. There exists a numerical constant $c_0>0$ such that if
\begin{equation}\label{eq:injectivity_radius}
r_x\ge c_0 \lambda_1(M)^{-3/4} \rho(x)^{-1/2}
\end{equation}
holds outside some compact set of $M$, then the Bergman distance verifies
\begin{equation}\label{eq:Estimate}
d_B(x,x_0) \gtrsim  \log [1+\rho(x)],\ \ \  \forall\,x\in M.
\end{equation}
\end{theorem}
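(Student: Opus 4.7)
The plan is to combine a \emph{cascade} subdivision with the classical Fubini--Study lower bound for $d_B$. For any rectifiable path $\gamma$ from $x_0$ to $x$, the function $\rho\circ\gamma$ is continuous and runs from $0$ to $\rho(x)$, so setting $t_k:=\inf\{t:\rho(\gamma(t))\ge 2^k\}$ and $y_k:=\gamma(t_k)$ for $k=0,1,\dots,N$, $N:=\lfloor\log_2\rho(x)\rfloor$, produces disjoint subsegments $\gamma|_{[t_k,t_{k+1}]}$ each of Bergman length $\ge d_B(y_k,y_{k+1})$. Hence $L_B(\gamma)\ge\sum_{k=0}^{N-1}d_B(y_k,y_{k+1})$, and the theorem reduces (after infimum over $\gamma$) to the uniform separation
\[
d_B(y_1,y_2)\ \ge\ c \qquad \forall\,r\ge 1,\ y_1\in\{\rho=r\},\ y_2\in\{\rho=2r\},
\]
with $c=c(\lambda_1(M))>0$.

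For this separation, since $ds^2_B$ is the pullback under $y\mapsto[K_M(\cdot,y)]$ of the Fubini--Study metric on projective Hilbert space,
\[
d_B(y_1,y_2)\ \ge\ \arccos\sqrt{\frac{|K_M(y_1,y_2)|^2}{K_M^*(y_1,y_1)\,K_M^*(y_2,y_2)}}.
\]
So it suffices to bound the Skwarczynski ratio strictly below $1$, uniformly in $r$. A sufficient condition is the existence of a unit $f\in\mathcal H(M)$ with $f(y_2)=0$ and $|f(y_1)|^2/K_M^*(y_1,y_1)\ge c_0>0$: then $\{f,\,K_M(\cdot,y_2)/\sqrt{K_M^*(y_2,y_2)}\}$ is orthonormal, and Bessel's inequality applied to the unit vector $K_M(\cdot,y_1)/\sqrt{K_M^*(y_1,y_1)}$ yields $|K_M(y_1,y_2)|^2/(K_M^*(y_1,y_1)K_M^*(y_2,y_2))\le 1-c_0$.

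I would construct such an $f$ by a H\"ormander-type weighted $L^2$ estimate for $\op$ on $M$. The injectivity radius hypothesis lifts the hyperbolic ball $B_{r_{y_1}}(y_1)$ isometrically to the Poincar\'e disc, providing a concrete holomorphic local differential $h$ with unit hyperbolic pointwise norm at $y_1$; cutting off $h$ by $\chi\in C_0^\infty(B_{r_{y_1}}(y_1))$ and setting $f:=\chi h-u$, where $u$ solves $\op u=\op(\chi h)$ with $u(y_2)=0$ (the latter vanishing enforced by a log-pole in the weight at $y_2$), produces a holomorphic $f$ on $M$ with $f(y_2)=0$. The driving weight $e^{-\phi}$ must satisfy $i\partial\op\phi\gtrsim\lambda_1(M)\cdot ds^2_{\rm hyp}$ globally (available from $\lambda_1(M)>0$ via heat semigroup / Green function constructions producing bounded weights whose Hessian is comparable to $\lambda_1$ times the hyperbolic metric) together with $\phi(y_2)-\phi(y_1)\gtrsim\sqrt{\lambda_1(M)}\,\rho(x)$, exploiting the Gaussian decay of the heat kernel. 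Converting the global weighted $L^2$ estimate into pointwise bounds $|f(y_1)|^2\gtrsim r_{y_1}^{-2}$ and $|u(y_2)|^2\lesssim r_{y_2}^{-2}\|u\|_{L^2(B_{r_{y_2}}(y_2))}^2$ costs a factor $r_x^{-2}$ per endpoint via the mean-value inequality on the lifted disc; matching this local loss against the exponential weight savings $e^{-c\sqrt{\lambda_1(M)}\,\rho}$ produces precisely the quantitative balance $r_x\gtrsim\lambda_1(M)^{-3/4}\rho(x)^{-1/2}$ of \eqref{eq:injectivity_radius}.

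The main obstacle is the construction of the weight $\phi$: on a general hyperbolic Riemann surface, neither $\rho$ nor the Green function is directly plurisubharmonic with the required Hessian lower bound, so $\phi$ has to be built by hand (smoothing Green function data or integrating against the heat semigroup), and ensuring both the global bound $i\partial\op\phi\gtrsim\lambda_1\,ds^2_{\rm hyp}$ and the sharp separation $\phi(y_2)-\phi(y_1)\gtrsim\sqrt{\lambda_1}\,\rho$ simultaneously is the delicate step that forces the precise numerical exponents in the hypothesis. A subsidiary difficulty is the uniform control of $K_M^*(y,y)$ in terms of the hyperbolic density, needed to interpret the pointwise estimate as a usable bound on the Bergman ratio; this should follow from the same local lifting argument, but its uniform version as $\rho\to\infty$ is an essential part of what drives the exponents.
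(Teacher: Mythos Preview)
Your architecture---cascade subdivision along a curve, Kobayashi/Fubini--Study lower bound $d_B\ge\arccos\sqrt{\mathcal B_M}$, then bound $\mathcal B_M$ away from $1$ via a $\bar\partial$ construction---matches the paper exactly. The gap is in the $\bar\partial$ mechanism itself.

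You postulate a global weight $\phi$ with $i\partial\bar\partial\phi\gtrsim\lambda_1(M)\,ds^2_{\rm hyp}$, obtained from heat-kernel or Green-function data. No such weight is available: on $M=\mathbb D/\Gamma$ the Green function is harmonic off its pole, and smoothed distance functions lose convexity past the cut locus. The paper never builds a plurisubharmonic weight with uniform Hessian. Instead it exploits $\lambda_1(M)>0$ \emph{directly} through the Poincar\'e inequality $\|f\|^2\le 4\lambda_1(M)^{-1}\|\bar\partial^\ast f\|^2$ on $(1,1)$-forms, and then runs the Berndtsson--Charpentier twist: for any Lipschitz $\varphi$ with $|\partial\varphi|^2\le\tau<\lambda_1(M)/9$, the $L^2$-\emph{minimal} solution $u_0$ of $\bar\partial u=v$ satisfies $\|u_0\|_{-\varphi}\lesssim_\tau\|v\|_{-\varphi}$ (Proposition~3.2). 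Taking $\varphi=-2\sqrt{\tau}\,\rho$ gives the exponential separation $e^{\sqrt{\tau}(\rho(y)-\rho(x))}$ without any curvature of $\varphi$. This is the missing idea.

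Your exponent accounting is also off. You describe the local cost as a polynomial factor $r_x^{-2}$ from the mean-value inequality; balanced against $e^{-c\sqrt{\lambda_1}\rho}$ that would permit $r_x$ to decay exponentially, far weaker than \eqref{eq:injectivity_radius}. In the paper the cost is \emph{exponential} in $r_x^{-2}$: to localize $K_M(\cdot,y)$ to $B_x$ one cuts off on the Green sublevel $\{g_M(\cdot,x)\le-\eta(x)\}$ via Donnelly--Fefferman, paying $e^{4\eta(x)}$, and this sublevel sits inside $B_x$ only when $\eta(x)\ge C_0\lambda_1(M)^{-1}|B_x|^{-1}\asymp\lambda_1(M)^{-1}\widehat r_x^{\,-2}$ (this uses the capacity bound \eqref{eq:CapVol} and a Harnack argument). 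The balance $\lambda_1^{-1}\widehat r_x^{\,-2}\lesssim\sqrt{\lambda_1}\,\rho(x)$ is what forces $r_x\gtrsim\lambda_1^{-3/4}\rho^{-1/2}$.
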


\begin{remark}
{\rm (1) The punctured disc $\mathbb D^\ast$ satisfies $\lambda_1(\mathbb D^\ast)>0$ but is not Bergman complete. This shows that the conclusion fails if $r_x$ decays rapidly at infinity.}
   
{\rm (2) If $\lambda_1(M)>0$ and  $\inf_{x\in{M}}r_x>0$,  then the Bergman metric is quasi-isometric to the hyperbolic metric (cf. \cite{ChenEssay}).}
\end{remark}
   
It is known that 
$$
r_x\gtrsim|B_1(x)|\ge e^{-C\rho(x)}
$$
 holds on any hyperbolic Riemann surface  (cf. \cite{CGT,SchoenYau}).  Therefore,   we would like  to ask the following
  
\begin{problem}
Let $M$ be a hyperbolic Riemann surface with $\lambda_1(M)>0$.  Is it possible to find $\varepsilon>0$ such that  $r_x\gtrsim e^{-\varepsilon \rho(x)}$ implies Bergman completeness of $M$?
\end{problem}
 
\section{Preliminaries}

\subsection{Several conditions equivalent to $\lambda_1(M)>0$}
    
By the uniformization theorem, we may write $M=\mathbb{D}/\Gamma$ for suitable  Fuchsian group $\Gamma$.  It follows from  a classical theorem of Myrberg (cf. \cite{Tsuji}) that $M$ is nonparabolic if and only if 
\[
\sum_{\gamma\in \Gamma} \,(1-|\gamma(0)|)<\infty.
\]
The \/ {\it critical exponent}\/  of Poincar\'e series is given by
\[
\delta(M):=\inf\left\{s\ge 0: \sum_{\gamma\in \Gamma}\, (1-|\gamma(0)|)^s<\infty\right\}.
\]
It is known that $\delta(M)\le 1$ (cf. \cite{Tsuji}).

Recall that the\/ {\it isoperimetric constant}\/  of $M$ is defined  as
\[
I(M)=\inf_{\Omega\in \mathcal F} \frac{|\partial \Omega|}{|\Omega|},
\]
where the supremum is taken over all precompact domains with  smooth boundary in $M$.  Here
$| \partial \Omega|$ and $|\Omega|$ denote the (hyperbolic) volume of $\partial \Omega$ and $|\Omega|$ respectively.
    
There are some striking relationships between these quantities and $\lambda_1(M)$: 
    
(1) Cheeger's inequality \cite{Cheeger}: $\lambda_1(M)\ge I(M)^2/4$; 
    
(2) Buser's inequality \cite{Buser}: $\lambda_1(M)\le C_0 I(M)$ for some numerical constant $C_0>0$; 
    
(3) Elstrodt-Patterson-Sullivan theorem \cite{Sullivan}:
\[
\lambda_1(M) = 
\begin{cases}
1/4\ \ \ &\text{if}\ 0\le \delta(M)\le 1/2,\\
\delta(M)(1-\delta(M)) &\text{if}\ 1/2<\delta(M)\le 1.
\end{cases}
\]
In particular, we have
\[
\lambda_1(M)>0 \iff I(M)>0 \iff \delta(M)<1\Rightarrow M\ \text{is nonparabolic}.
\]
This shows that the class of Riemann surfaces with $\lambda_1(M)>0$ is quite large.  

\subsection{Capacity}
Given a compact set $E$ in $M$,  define the capacity ${\rm cap}(E)$ of $E$ by
\[
{\rm cap}(E)=\inf\int_M  |\nabla \phi|^2 dV
\]
where the infimum is taken over all $\phi\in C^\infty_0(M)$ such that $0\le \phi\le 1$ and $\phi|_E=1$.  For any $c< \lambda_1(M)$,  we have 
\[
c \int_M|\phi|^2dV\leq\int_M|\nabla\phi|^2dV,\ \ \ \forall\,\phi\in{C^\infty_0(M)}.
\]
It follows that for any $\phi\in C_0^\infty(M)$ with $0\le \phi\le 1$ and $\phi|_E=1$,  
\[
\int_M|\nabla\phi|^2dV\geq c |E|,
\]
so that ${\rm cap}(E)\ge c|E|$.  Letting $c\rightarrow \lambda_1(M)-$,  we obtain 
\begin{equation}\label{eq:CapVol}
{\rm cap}(E)\geq\lambda_1(M)|E|.
\end{equation}

Let $g_M(x,y)$ denote the (negative) Green function on $M$, i.e., given any local coordinate $z$ near $y$ with $z(y)=0$, $g_M(\cdot,y)$ is the supremum of all negative subharmonic functions $u$ on $M$ with
\[
u(x)=\log|z(x)|+O(1)
\]
as $x\rightarrow{y}$. It follows from Proposition 4.1 in \cite{Grigoryan} that for every open set $\Omega\subset\subset M$
\begin{equation}\label{eq:GreenVsCapacity}
\inf_{\partial\Omega} [-g_M(\cdot,y)]\le \frac{2\pi} {{\rm cap}(\overline{\Omega})}\le  \sup_{\partial\Omega} [-g_M(\cdot,y)],\ \ \ {\forall\,} y\in \Omega
\end{equation}
(see also \cite{ChenJLMS},  Lemma 3.3).

\subsection{A Harnack inequality}
Let us write $M=\mathbb D/\Gamma$ for suitable Fuchsian group $\Gamma$ and let $\varpi:\mathbb{D}\rightarrow{M}$ be the universal covering map.  Given $x\in{M}$ and $\widetilde{x}\in\varpi^{-1}(x)$,  consider the  fundamental domain
\[
D:=\left\{z\in\mathbb{D}:d(z,\widetilde{x})<\inf_{\gamma\in\Gamma\setminus\{1\}}d(z,\gamma\widetilde{x})\right\}\ni\widetilde{x},
\]
such that $\varpi|_D$ is injective and $M\setminus\varpi(D)$ is of zero measure. It follows that $B_{r_x}(\widetilde{x})\subset{D}$,  so that $\varpi: B_{r_x}(\widetilde{x})\rightarrow B_{r_x}(x)$ is a homeomorphism, where $r_x$ is the injectivity radius at $x$.

Let $g_M(\cdot,x)$ be the Green function of $M$ with a logarithmic pole at $x$ and set 
$$
u:=\varpi^*g_M(\cdot,x)=g_M(\varpi(\cdot),x).
$$
Clearly,  $u$ is harmonic on $\mathbb{D}\setminus\varpi^{-1}(x)$.  In particular, given any $\widetilde{x}\in\varpi^{-1}(x)$, $u$ is harmonic on $B_{2r_x}(\widetilde{x})\setminus\{\widetilde{x}\}$. Set
\[
\widehat{r}_x:=\min\{r_x,1\}.
\]
We have the following Harnack inequality for $u$.

\begin{proposition}\label{prop:Harnack}
There exists a numerical constant $C_0$ such that
\[
\sup_{\partial B_{\widehat{r}_x}(\widetilde{x})}(-u)\leq C_0\inf_{\partial B_{\widehat{r}_x}(\widetilde{x})}(-u).
\]
\end{proposition}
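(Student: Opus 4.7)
The plan is to reduce to Euclidean geometry on $\mathbb{D}$ by sending $\widetilde{x}$ to $0$, and then apply the classical scale-invariant Harnack inequality on a suitable annulus.

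First I would use the Möbius invariance of the Poincaré metric to compose $\varpi$ with an automorphism of $\mathbb{D}$ so that $\widetilde{x}=0$; this preserves all of the relevant data. Hyperbolic balls centered at $0$ are Euclidean disks, so $B_{2r_x}(0)=\{|z|<R\}$ and $\partial B_{\widehat{r}_x}(0)=\{|z|=r\}$, where $R:=\tanh r_x$ and $r:=\tanh(\widehat{r}_x/2)$. By the definition of $r_x$, the set $\varpi^{-1}(x)$ meets $B_{2r_x}(0)$ only at $0$, so $u$ is harmonic on the punctured disk $\{0<|z|<R\}$. Since $g_M<0$, we have $u<0$, and hence $v:=-u$ is a \emph{positive} harmonic function on $\{0<|z|<R\}$ with a positive logarithmic pole at the origin.

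Next I would check the elementary fact that $R/r$ is bounded away from $1$ and from $\infty$ by universal constants. Splitting into the cases $r_x\le 1$ and $r_x>1$, a direct computation gives $R/r\in\bigl[\tanh 1/\tanh(1/2),\,1/\tanh(1/2)\bigr]$. This is precisely the reason for the truncation $\widehat{r}_x=\min\{r_x,1\}$: without it, $R/r=\tanh r_x/\tanh(r_x/2)\to 1$ as $r_x\to\infty$, and no annulus of definite modulus would be available.

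With the ratio bound in hand, I fix a universal $\lambda>1$ with $\lambda^2<\tanh 1/\tanh(1/2)$ (for instance $\lambda=5/4$). Then the open annulus $A:=\{r/\lambda^2<|z|<\lambda^2 r\}$ is compactly contained in $\{0<|z|<R\}$ and contains the circle $\{|z|=r\}$. Applying the classical Harnack inequality to the positive harmonic function $v$ on $A$, at the compact subset $\{|z|=r\}$—whose constant is scale invariant and therefore depends only on $\lambda$—yields $\sup_{|z|=r}v\le C_0\inf_{|z|=r}v$ for a numerical constant $C_0$, which is the desired inequality. The argument is essentially soft; the only subtlety is the ratio bound that motivates the truncation at $1$ in the definition of $\widehat{r}_x$.
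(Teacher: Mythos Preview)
Your argument is correct and in fact somewhat cleaner than the paper's. The paper proceeds by a Harnack chain: it first proves a covering lemma stating that the hyperbolic circle $\partial B_r(z)$, for any $0<r\le 1$, can be covered by a fixed number $N_0$ of hyperbolic balls $B_{r/2}(z_j)$ with centers on the circle, and then chains the elementary Harnack inequality (with constant $3$) through these $N_0$ balls to obtain $C_0=3^{N_0}$. Your approach bypasses the covering lemma entirely by sending $\widetilde{x}$ to $0$, passing to Euclidean coordinates, and invoking the scale-invariant Harnack inequality directly on a fixed-modulus annulus, which packages the chain argument into a single black-box constant. Both routes hinge on the same uniformity---your annulus ratio bound $R/r\ge\tanh 1/\tanh(1/2)$ is exactly what makes the paper's $N_0$ universal---and you make explicit the reason for the truncation $\widehat{r}_x=\min\{r_x,1\}$, which the paper leaves implicit. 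The paper's version has the minor advantage of giving an explicit (if crude) value for $C_0$; yours is shorter and more conceptual.
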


The idea is to find a chain of discs covering $\partial B_{\widehat{r}_x}(\widetilde{x})$, so that the classical Harnack inequality applies. More precisely, let us first verify the following

\begin{lemma}\label{lm:covering}
There exists a numerical integer $N_0$ such that for any $0<r\leq1$ and $z\in\mathbb{D}$, one can find $z_1,\cdots,z_{N_0}\in\partial{B_r(z)}$ with
\begin{equation}\label{eq:Harnack_disc}
\partial{ B_r(z)}\subset\bigcup^{N_0}_{j=1}B_{r/2}(z_j).
\end{equation}
\end{lemma}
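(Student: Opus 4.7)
The plan is to exploit the homogeneity of $\mathbb{D}$ under its isometry group in order to reduce to the concrete case $z=0$, and then run an explicit arc-length computation on the Euclidean circle which represents $\partial B_r(0)$. Since every point $z\in\mathbb{D}$ is the image of $0$ under some M\"obius automorphism of $\mathbb{D}$, and such automorphisms are hyperbolic isometries that preserve the geodesic balls $B_s(\cdot)$ for every radius $s$, it suffices to prove the lemma for $z=0$. For $z=0$, the hyperbolic ball $B_r(0)$ coincides with the Euclidean disc of radius $\tanh(r/2)$, so $\partial B_r(0)$ is simply the Euclidean circle $\{w\in\mathbb{D}:|w|=\tanh(r/2)\}$.

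Next I would compute the hyperbolic length element along $\partial B_r(0)$: parametrizing $w=\tanh(r/2)\,e^{i\theta}$ and using the Poincar\'e density $2(1-|w|^2)^{-1}$, a short calculation gives arc-length element $\sinh r\,d\theta$, hence total hyperbolic circumference $2\pi\sinh r$. Now pick $N$ points $z_1,\dots,z_N\in\partial B_r(0)$ equally spaced in Euclidean angle, i.e.\ $z_j=\tanh(r/2)\,e^{2\pi ij/N}$. Between any two cyclically consecutive points $z_j,z_{j+1}$, the arc of $\partial B_r(0)$ has hyperbolic length $2\pi\sinh r/N$, and so every point $w$ lying on this arc is within hyperbolic distance $\pi\sinh r/N$ of the nearer endpoint. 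Provided
\[
\frac{\pi\sinh r}{N}\le \frac{r}{2},
\]
we have $w\in B_{r/2}(z_j)\cup B_{r/2}(z_{j+1})$, so that the collection $\{B_{r/2}(z_j)\}_{j=1}^N$ covers $\partial B_r(0)$.

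The uniformity in $r$ now follows from the elementary observation that $r\mapsto \sinh r/r$ is increasing on $(0,\infty)$, hence bounded above by $\sinh 1$ throughout $0<r\le 1$. Therefore the integer $N_0:=\lceil 2\pi\sinh 1\rceil$ is admissible simultaneously for every $0<r\le 1$ and every $z\in\mathbb{D}$. There is no genuine obstacle here; the only thing to watch is the arc-length computation giving exactly $\sinh r\,d\theta$ on the Euclidean circle, and the mild estimate $\sinh r/r\le \sinh 1$ which makes the constant $N_0$ independent of $r$.
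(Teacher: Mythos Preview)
Your proof is correct and, in fact, cleaner than the paper's. Both arguments begin with the same reduction to $z=0$ via the transitivity of the M\"obius group, but from there the routes diverge. The paper chooses a coarser strategy: it invokes a uniform comparability $C^{-1}ds^2_{\mathrm{eucl}}\le ds^2_{\mathrm{hyp}}\le C\,ds^2_{\mathrm{eucl}}$ on $B_1(0)$, then uses the Euclidean law of cosines to bound from below the central angle of the arc of $\partial B_r(0)$ cut out by a single ball $B_{r/2}(\zeta)$, obtaining $N_0=[\pi/\theta]+1$ with $\theta=\arccos(1-1/(8C^4))$. You instead compute the hyperbolic circumference exactly as $2\pi\sinh r$ and bound the distance to the nearest node by arc length, reducing the problem to the single inequality $\sinh r/r\le\sinh 1$ on $(0,1]$. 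Your approach yields a concrete small constant ($N_0=\lceil 2\pi\sinh 1\rceil=8$), while the paper's comparison argument gives a larger, less explicit one; on the other hand, the paper's method does not require the exact formula for the circumference and would adapt more readily to situations where only metric comparability is known.
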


\begin{proof}
Since the group of M\"{o}bius transformations of $\mathbb{D}$ acts transitively, we may assume that $z=0$. Denote $ds^2_{\mathrm{eucl}}$ the Euclidean metric of $\mathbb{C}$. It is easy to see that there exists numerical constant $C>1$ such that
\[
C^{-1}ds^2_{\mathrm{eucl}}\leq ds^2_{\mathrm{hyp}}\leq Cds^2_{\mathrm{eucl}}
\]
on $B_{1}(0)$. Thus $B_r(0)\subset \Delta(0,Cr)$ and $\Delta(\zeta,C^{-1}r/2)\subset B_{r/2}(\zeta)$ for any $\zeta\in\partial{B_r(0)}$ and $0<r\le 1$, where $\Delta(\zeta,s)$ denotes a Euclidean disc centred at $\zeta$ with radius $s$. It follows that every hyperbolic disc ${B}_{r/2}(\zeta)$ covers an arc of $\partial{B}_r(0)$ with a central angle larger than $2\theta$, where
\[
\theta=\arccos\frac{C^2r^2+C^2r^2-C^{-2}r^2/4}{2(Cr)^2}=\arccos\left(1-\frac{1}{8C^4}\right),
\]
in view of the law of cosine.  Thus $\partial{B}_r(0)$ can be covered by $N_0:=[\pi/\theta]+1$ hyperbolic discs ${B}_{r/2}(z_j)$, where $z_j\in\partial{B}_r(0)$ and $j=1,2,\cdots,N_0$.
\end{proof}

\begin{proof}[Proof of Proposition \ref{prop:Harnack}]
By Lemma \ref{lm:covering}, we have
\[
\partial{B}_{\widehat{r}_x}(\widetilde{x})\subset\bigcup^{N_0}_{j=1}B_{\widehat{r}_x/2}(z_j),
\]
where $z_1,\cdots,z_{N_0}\in\partial{B}_{\widehat{r}_x}(\widetilde{x})$. The Harnack inequality gives
\[
\frac{1}{3}\leq\frac{u(z)}{u(w)}\leq3,\ \ \ \forall\,z,w\in {B}_{\widehat{r}_x/2}(z_j),
\]
so that
\[
\frac{1}{3^{N_0}}\leq\frac{u(z)}{u(w)}\leq3^{N_0},\ \ \ \forall\,z,w\in \partial{B}_{\widehat{r}_x}(\widetilde{x}).
\]
Thus Proposition \ref{prop:Harnack} holds with $C_0=3^{N_0}$.
\end{proof}

To simplify notations,  let us write
\[
B_x:=B_{\widehat{r}_x}(x)=\begin{cases}
B_{r_x}(x),\ \ \ &x\leq 1,\\
B_1(x),\ \ \ &x>1.
\end{cases}
\]
Proposition \ref{prop:Harnack} and \eqref{eq:GreenVsCapacity} imply
\[
\sup_{\partial{B_x}}(-g_M(\cdot,x))\leq{C_0}\mathrm{cap}(\overline{B}_x)^{-1},
\]
for suitable numerical constant $C_0>0$ (different from the one in Proposition \ref{prop:Harnack}),  so that
\[
\{g_M(\cdot,x)\leq-{C_0}\mathrm{cap}(\overline{B}_x)^{-1}\}\subset\overline{B}_x,\ \ \ \forall\,x\in{M}.
\]
This combined with \eqref{eq:CapVol} yields
\begin{equation}\label{eq:including_Green}
\{g_M(\cdot,x)\leq-{C_0}\lambda_1(M)^{-1}|B_x|^{-1}\}\subset\overline{B}_x,\ \ \ \forall\,x\in{M}.
\end{equation}

\section{An estimate for the $L^2$-minimal solution of the $\bar\partial$-equation}
Since the complex Laplace operator is given by $\Box = \frac14 \Delta$, it follows that 
\begin{equation}\label{eq:eigenvalue}
\lambda_1(M)=4 \inf \left\{\frac{\int_M |\partial \phi|^2}{\int_M |\phi|^2}:\phi \in C_0^\infty(M)\backslash \{0\}\right\}.
\end{equation}
Here and in what  follows in this section we  denote by $C^\infty_0(M)$ the set of complex-valued smooth functions with compact support in $M$.  To see \eqref{eq:eigenvalue},  simply note that for every $\phi\in C_0^\infty(M)$, 
\[
\int_M |\bar{\partial}\phi|^2 dV=\frac{i}2\int_M \partial \bar{\phi}\wedge \bar{\partial} \phi  =- \frac{i}2\int_M \bar{\phi} \partial\bar{\partial} \phi=\frac{i}2\int_M \partial {\phi}\wedge \bar{\partial} \bar{\phi}=\int_M |\partial \phi|^2 dV,
\]
so that
\[
 \int_M |\nabla\phi|^2 dV = 2\int_M |\partial \phi|^2dV+2 \int_M |\bar{\partial} \phi|^2 dV=4\int_M |\partial \phi|^2 dV.
\]

Let $\varphi$ be a continuous real-valued function  on $M$. Let $D_{(p,q)}(M)$ be the set of smooth $(p,q)$ forms with compact support in $M$ and let   $L^2_{(p,q)}(M,\varphi)$  be the completion of $D_{(p,q)}(M)$ with respect to the following norm 
\[
\|f\|_\varphi^2:= \int_M |f|^2 e^{-\varphi} dV.
\]
Here $|f|$ and $dV$ denote the point-wise length and the volume associated the hyperbolic metric $ds^2_{\rm hyp}$. It is important to remak that if $f$ is a $(1,0)$  form then 
\[
\|f\|_\varphi^2 =\frac{i}2 \int_M f\wedge \bar{f} e^{-\varphi},
\]
which is essentially independent of $ds^2_{\rm hyp}$.

\begin{lemma}\label{lm:L2}
Let $\varepsilon,\tau$ be positive numbers satisfying
\[
(1+\varepsilon)\tau<\lambda_1(M).
\]
Let $\varphi$ be a Lipschitz continuous real-valued function on $M$ such that
\[
|\partial \varphi|^2 \le \tau\ \ \ a.e.
\]
For any $v\in L^2_{(1,1)}(M,\varphi)$, there exists a solution of\/ $\bar{\partial}u=v$ such that
\[
\|u\|_\varphi\le \sqrt{C_{\varepsilon,\tau}}\,\|v\|_\varphi
\]
where
\[
C_{\varepsilon,\tau}=\frac{4(1+\varepsilon^{-1})}{\lambda_1(M)-(1+\varepsilon)\tau}.
\]
\end{lemma}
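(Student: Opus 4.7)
The plan is to invoke the standard H\"ormander / Andreotti-Vesentini $L^2$ existence scheme in the weighted setting. Viewing $T:=\bar\partial$ as a closed densely-defined operator from $L^2_{(1,0)}(M,\varphi)$ to $L^2_{(1,1)}(M,\varphi)$, with Hilbert-space adjoint $T^*=\bar\partial^{\,*}_\varphi$, the existence statement follows by the usual duality / closed-range argument from the a priori estimate
\[
\|\alpha\|_\varphi^2 \le C_{\varepsilon,\tau}\,\|T^*\alpha\|_\varphi^2 \qquad (\alpha\in D_{(1,1)}(M)),
\]
after which the solution $u$ with $\|u\|_\varphi\le\sqrt{C_{\varepsilon,\tau}}\,\|v\|_\varphi$ is recovered by Cauchy-Schwarz. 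No $\bar\partial\alpha$ term appears because top-degree forms on a Riemann surface are automatically $\bar\partial$-closed, and the estimate itself forces $\ker T^*=0$, removing any orthogonality obstruction.

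To reduce the a priori estimate to a scalar inequality, I would identify the $(1,1)$-form $\alpha$ with a function via $\alpha=\phi\,dV$, where $dV$ is the hyperbolic volume form and $\phi\in C_0^\infty(M;\mathbb C)$. A direct integration by parts yields
\[
\|\alpha\|_\varphi^2 = \int_M|\phi|^2 e^{-\varphi}dV, \qquad \|T^*\alpha\|_\varphi^2 = c\int_M\bigl|\partial\phi-\phi\,\partial\varphi\bigr|^2 e^{-\varphi}dV
\]
for an explicit numerical constant $c>0$. The substitution $\xi:=\phi\,e^{-\varphi/2}$ cancels the weight, turning the target inequality into
\[
\int_M|\xi|^2 dV \le C'\int_M\bigl|\partial\xi-\tfrac12\xi\,\partial\varphi\bigr|^2 dV, \qquad \xi\in C_0^\infty(M;\mathbb C),
\]
where $C'$ is a fixed multiple of $C_{\varepsilon,\tau}$.

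For the scalar inequality, I would combine the elementary pointwise bound $|A|^2\le(1+\varepsilon)|A-B|^2+(1+\varepsilon^{-1})|B|^2$, applied with $A=\partial\xi$ and $B=\tfrac12\xi\,\partial\varphi$, with the hypothesis $|\partial\varphi|^2\le\tau$ to obtain
\[
\int_M|\partial\xi|^2 dV \le (1+\varepsilon)\int_M\bigl|\partial\xi-\tfrac12\xi\,\partial\varphi\bigr|^2 dV + \frac{(1+\varepsilon^{-1})\tau}{4}\int_M|\xi|^2 dV.
\]
The spectral characterisation \eqref{eq:eigenvalue} gives $\int_M|\partial\xi|^2 dV\ge \tfrac14\lambda_1(M)\int_M|\xi|^2 dV$; combining and rearranging produces precisely the factor $4(1+\varepsilon^{-1})/(\lambda_1(M)-(1+\varepsilon)\tau)=C_{\varepsilon,\tau}$.

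The main obstacle is the accurate bookkeeping of constants in the adjoint computation: the conventions for pointwise norms of $(1,0)$- and $(1,1)$-forms with respect to the hyperbolic metric must be tracked so that the constant $c$ from $\|T^*\alpha\|_\varphi^2$ and the constant from the scalar step combine to give exactly $C_{\varepsilon,\tau}$. A minor technical point is that $\varphi$ is only Lipschitz rather than smooth, but since $|\partial\varphi|^2$ is defined and bounded almost everywhere, the integrations by parts are valid; should a smooth weight be needed at intermediate stages, one mollifies $\varphi$, obtains the estimate with uniform constants, and passes to the limit.
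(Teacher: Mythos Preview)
Your proposal is correct and follows essentially the same route as the paper: identify $(1,1)$-forms with scalar functions, apply the unweighted Poincar\'e inequality \eqref{eq:eigenvalue} to the twisted function $\xi=\phi e^{-\varphi/2}$ (equivalently, the paper applies \eqref{eq:PoincareIneq} to $fe^{-\varphi/2}$), split via the elementary inequality, absorb the $\tau$-term, and finish by the standard Hahn--Banach/Riesz duality. One cosmetic slip: with your choice $A=\partial\xi$, $B=\tfrac12\xi\,\partial\varphi$ and the inequality $|A|^2\le(1+\varepsilon)|A-B|^2+(1+\varepsilon^{-1})|B|^2$, the constant you obtain is $\dfrac{4(1+\varepsilon)}{\lambda_1(M)-(1+\varepsilon^{-1})\tau}$, i.e.\ $C_{\varepsilon,\tau}$ with $\varepsilon\leftrightarrow\varepsilon^{-1}$; to match the stated $C_{\varepsilon,\tau}$ exactly you should swap the roles of $\varepsilon$ and $\varepsilon^{-1}$ in the elementary inequality (which is what the paper does).
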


\begin{proof}
With $ds^2_{\rm hyp}=\mu(z) dz\otimes d\bar{z}$ we define two inner products 
\begin{eqnarray*}
(f_1,f_2)
&=& \int_M \phi_1 \bar{\phi}_2 \mu^{-1}dV_z\\
(f_1,f_2)_\varphi
&=& \int_M \phi_1 \bar{\phi}_2 \mu^{-1} e^{-\varphi} dV_z
\end{eqnarray*}
where $f_1=\phi_1 dz\wedge d\bar{z},\,f_2=\phi_2 dz\wedge d\bar{z}\in D_{(1,1)}(M)$, and $dV_z=\frac{\sqrt{-1}}2 dz\wedge d\bar{z}$. Let $\bar{\partial}^\ast$ and $\bar{\partial}^\ast_\varphi$ be the formal adjoint of $\bar{\partial}$ associated to $(\cdot,\cdot)$ and $(\cdot,\cdot)_\varphi$ respectively. For all $u=\psi dz\in D_{(1,0)}(M)$ and $f=\phi dz\wedge d\bar{z}\in D_{(1,1)}(M)$, we have
\[
(f,\bar{\partial}u)=-\int_M \phi \frac{\partial \bar{\psi}}{\partial z} \mu^{-1} dV_z=\int_M \frac{\partial}{\partial z}(\mu^{-1}\phi)\bar{\psi} dV_z,
\]
so that
\[
\bar{\partial}^\ast f=\frac{\partial}{\partial z}(\mu^{-1}\phi) dz.
\]
Since $\tilde{f}:=\mu^{-1}\phi\in C_0^\infty(M)$, it follows that $\bar{\partial}^\ast f=\partial \tilde{f}$ and (\ref{eq:eigenvalue}) implies 
\begin{equation}\label{eq:PoincareIneq}
\int_M |f|^2 dV\le \frac4{\lambda_1(M)}\int_M |\bar{\partial}^\ast f|^2 dV.
\end{equation}
Analogously,  we have
\[
(f,\bar{\partial}u)_\varphi=-\int_M \phi \frac{\partial \bar{\psi}}{\partial z} \mu^{-1} e^{-\varphi} dV_z=\int_M \frac{\partial}{\partial z}(\mu^{-1}\phi e^{-\varphi})\bar{\psi} dV_z,
\]
so that
\[
\bar{\partial}^\ast_\varphi f=\left(\frac{\partial}{\partial z}(\mu^{-1}\phi)-\mu^{-1}\phi\frac{\partial\varphi}{\partial z}\right) dz=:\bar{\partial}^\ast f+\bar{\partial} \varphi \lrcorner\, f.
\]
By (\ref{eq:PoincareIneq}), we have
\begin{eqnarray*}
\int_M |f|^2 e^{-\varphi} dV
&=& \int_M |fe^{-\varphi/2}|^2 dV\\
&\leq& \frac4{\lambda_1(M)}\int_M |\bar{\partial}^\ast (fe^{-\varphi/2})|^2dV\\
&=& \frac4{\lambda_1(M)}\int_M \left|\bar{\partial}^\ast f+\frac12\bar{\partial} \varphi \lrcorner\,f\right|^2e^{-\varphi}dV\\
&=& \frac4{\lambda_1(M)}\int_M \left|\bar{\partial}^\ast_\varphi f-\frac12\bar{\partial} \varphi\lrcorner\,f\right|^2e^{-\varphi}dV\\
&\leq& \frac4{\lambda_1(M)}\left\{(1+\varepsilon^{-1})\|\bar{\partial}^\ast_\varphi f\|_\varphi^2+(1+\varepsilon)\frac{\tau}4 \|f\|_\varphi^2\right\},
\end{eqnarray*}
so that
\[
\|f\|^2_\varphi \le C_{\varepsilon,\tau}\|\bar{\partial}^\ast_\varphi f\|_\varphi^2.
\]
The remaining argument is standard. Given $v\in L^2_{(1,1)}(M,\varphi)$,  the linear functional
\[
{\rm Range\,}\bar{\partial}^\ast_\varphi \rightarrow {\mathbb C},\ \ \ \ \ \bar{\partial}^\ast_\varphi f \mapsto (f,v)_\varphi
\]
is bounded by $\sqrt{C_{\varepsilon,\tau}}\|v\|_\varphi$. Thus by Hahn-Banach's theorem and the Riesz representation theorem, there is a unique $u\in L^2_{(1,0)}(M,\varphi)$ such that
\[
(\bar{\partial}^\ast_\varphi f,u)_\varphi=(f,v)_\varphi
\]
for all $f\in \mathcal{D}_{(1,1)}(M)$, i.e., $\bar{\partial}u=v$ holds in the sense of distributions, such that
\[
\int_M |u|^2e^{-\varphi} dV \le C_{\varepsilon,\tau}\int_M |v|^2e^{-\varphi} dV. \qedhere
\]
\end{proof}

\begin{proposition}\label{prop:DF}
Let $\varphi$ be a  Lipschitz continuous real-valued function on $M$ which satisfies
\[
|\partial \varphi|^2\le \tau<\lambda_1(M)/9\ \ \ a.e.
\]
Let $v\in L^2_{(1,1)}(M)$ and let $u_0$ be the $L^2$\/ {\rm minimal\/} solution of the equation $\bar{\partial}u=v$. Then
\[
\|u_0\|_{-\varphi}\le {\rm const}_\tau\|v\|_{-\varphi}.
\]
\end{proposition}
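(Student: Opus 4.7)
The plan is to use a Donnelly-Fefferman style twist centred on the auxiliary $(1,0)$-form $y := e^{\varphi}u_{0}$. Its crucial property is that the unweighted $L^{2}$-pairing of $u_{0}$ against $y$ reproduces the quantity we wish to control:
\[
(u_{0},y)_{L^{2}(M)} = \int_{M} e^{\varphi}|u_{0}|^{2}\,dV = \|u_{0}\|_{-\varphi}^{2}.
\]
A direct calculation gives $\bar{\partial}y = e^{\varphi}v + e^{\varphi}\,\bar{\partial}\varphi\wedge u_{0}$, whose weighted $L^{2}(e^{-\varphi})$-size is controlled by $\|v\|_{-\varphi}$ and $\|u_{0}\|_{-\varphi}$ modulo the factor $\tau$ coming from $|\bar\partial\varphi|^{2}\le\tau$.

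Next I would apply Lemma \ref{lm:L2} with the admissible weight $\varphi$ (here $\tau<\lambda_{1}(M)/9<\lambda_{1}(M)$) to solve $\bar{\partial}\eta = \bar{\partial}y$ with an estimate
\[
\|\eta\|_{\varphi}^{2} \le C_{\varepsilon,\tau}\bigl[(1+\delta)\|v\|_{-\varphi}^{2} + (1+\delta^{-1})\tau\,\|u_{0}\|_{-\varphi}^{2}\bigr].
\]
The difference $h := y-\eta$ is then $\bar\partial$-closed, i.e.\ a holomorphic $(1,0)$-form on $M$. Exploiting the defining property of the $L^{2}$-minimal solution, $u_{0}\perp\mathcal{H}(M)$ in the unweighted $L^{2}$, gives $(u_{0},h)_{L^{2}(M)}=0$, so
\[
\|u_{0}\|_{-\varphi}^{2} = (u_{0},y)_{L^{2}(M)} = (u_{0},\eta)_{L^{2}(M)} \le \|u_{0}\|_{-\varphi}\,\|\eta\|_{\varphi}
\]
by Cauchy-Schwarz after splitting $1 = e^{\varphi/2}\cdot e^{-\varphi/2}$ inside the integrand. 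Dividing yields $\|u_{0}\|_{-\varphi}\le\|\eta\|_{\varphi}$, so combining gives the self-referential bound
\[
\|u_{0}\|_{-\varphi}^{2} \le C_{\varepsilon,\tau}(1+\delta)\|v\|_{-\varphi}^{2} + C_{\varepsilon,\tau}(1+\delta^{-1})\tau\,\|u_{0}\|_{-\varphi}^{2}.
\]
The numerical threshold under which the last term can be absorbed, optimised over $\varepsilon$ and $\delta$, works out to $\tau\bigl[4(1+\varepsilon^{-1})(1+\delta^{-1}) + (1+\varepsilon)\bigr] < \lambda_{1}(M)$; letting $\delta\to\infty$ and taking $\varepsilon = 2$ this approaches the sharp value $9\tau<\lambda_{1}(M)$ -- precisely the hypothesis -- after which the desired $\mathrm{const}_{\tau}$ is read off.

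The hard part will be justifying these manipulations rigorously, since neither the a priori finiteness of $\|u_{0}\|_{-\varphi}$ nor the membership $h\in L^{2}(M)\cap\mathcal{H}(M)$ needed for the orthogonality step is automatic when $\varphi$ is unbounded. I plan to circumvent this by approximation: replace $\varphi$ by the bounded Lipschitz truncations $\varphi_{R} := \min(\varphi,R)$, which still satisfy $|\partial\varphi_{R}|^{2}\le\tau$ a.e.\ and obey $\|v\|_{-\varphi_{R}}\le\|v\|_{-\varphi}$. With $\varphi_{R}$ bounded all integrability concerns dissolve, the argument above runs with a constant independent of $R$, and monotone convergence as $R\to\infty$ transfers the estimate back to the original~$\varphi$.
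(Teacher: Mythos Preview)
Your argument is correct and follows the same Berndtsson--Charpentier twist as the paper (multiply the minimal solution by $e^{\varphi}$, use orthogonality to $\mathcal{H}(M)$ to identify it as the weighted minimal solution, apply Lemma~\ref{lm:L2}, absorb, and optimize at $\varepsilon=2$ to hit the threshold $9\tau<\lambda_1(M)$). The only difference is the regularization device: the paper exhausts $M$ by precompact $\Omega_n$ (on which $\varphi$ is automatically bounded), works with the $\Omega_n$-minimal solutions $u_n$, and passes to $u_0$ by weak convergence, whereas you truncate $\varphi$ from above and work with $u_0$ on all of $M$ throughout---note that your $\varphi_R=\min(\varphi,R)$ is bounded only from above, but that one-sided bound is exactly what each of your integrability checks ($y_R\in L^2$, $\eta_R\in L^2$, $\|u_0\|_{-\varphi_R}<\infty$) requires.
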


   \begin{proof}
   We will employ a trick from \cite{BC} to get the desired estimate. Let $M$ be exhausted by a sequence of precompact open subsets $\{\Omega_n\}$ with smooth boundaries. Let $\lambda_1(\Omega_n)$ be the infimum of the spectrum of $-\Delta$ on $\Omega_n$. It is easy to see that Lemma \ref{lm:L2} remains valid if $M$ is replaced by $\Omega_n$. Let $u_n$ be the $L^2$ minimal solution of $\bar{\partial} u=v$ on $\Omega_n$.
    Since $\varphi$ is bounded on $\Omega_n$ and $u_n\bot {\rm Ker\,}\bar{\partial}$ in $L^2_{(1,0)}(\Omega_n)$, we conclude that $u_ne^{\varphi}\bot {\rm Ker\,}\bar{\partial}$ in $L^2_{(1,0)}(\Omega_n,\varphi)$, so that  Lemma \ref{lm:L2} yields
\begin{eqnarray*}
\int_{\Omega_n} |u_n|^2e^{\varphi} dV
&=& \int_{\Omega_n} |u_ne^{\varphi}|^2e^{-\varphi} dV\\
&\leq&  C_{\varepsilon,\tau,n} \int_{\Omega_n} |\bar{\partial}(u_ne^{\varphi})|^2e^{-\varphi} dV\\
&\leq& C_{\varepsilon,\tau,n}\left\{(1+\delta^{-1})\int_{\Omega_n} |v|^2 e^{\varphi} dV+(1+\delta)\tau \int_{\Omega_n} |u_n|^2 e^{\varphi} dV \right\}
\end{eqnarray*}
for all $\delta>0$, provided
\[
(1+\varepsilon)\tau<\lambda_1(M)\le \lambda_1(\Omega_n).
\]
Here 
\[
C_{\varepsilon,\tau,n}=\frac{4(1+\varepsilon^{-1})}{\lambda_1(\Omega_n)-(1+\varepsilon)\tau}\le C_{\varepsilon,\tau}.
\]
Thus
\[
\int_{\Omega_n} |u_n|^2e^{\varphi} dV \le \frac{C_{\varepsilon,\tau}(1+\delta^{-1})}{1-(1+\delta)\tau C_{\varepsilon,\tau}}\int_{M} |v|^2e^{\varphi}dV
\]
    provided
    $$
    (1+\delta)\tau<C_{\varepsilon,\tau}^{-1}.
    $$
    We may take a subsequence of $\{u_n\}$ which converge weakly to $u_0$ such that
    $$
    \|u_0\|^2_{-\varphi} \le \frac{C_{\varepsilon,\tau}(1+\delta^{-1})}{1-(1+\delta)\tau C_{\varepsilon,\tau}}\,\|v\|_{-\varphi}^2.
    $$
    We look for the best $\tau$ which satisfies
    $$
    (1+\varepsilon)\tau<\lambda_1(M)\ \ \ {\rm and\ \ \ } \tau<C_{\varepsilon,\tau}^{-1}.
    $$
Note that $\tau<C_{\varepsilon,\tau}^{-1}$ if and only if
\[
\tau<\frac{\varepsilon}{(1+\varepsilon)(4+\varepsilon)}\lambda_1(M),
\]
whereas the function $\varepsilon/(1+\varepsilon)(4+\varepsilon)$ attains its maximum $1/9$ at $\varepsilon=2$. In other words, $\tau<\lambda_1(M)/9$ is the best possible.
\end{proof}

\section{Upper bounds for the off-diagonal Bergman kernel}
Let us write $ds^2_{\rm hyp}=\mu(z) dz \otimes d\bar{z}$ in local coordinates. Define 
\[
|K_M(x,y)|^2:=\frac{|K_M^\ast(x,y)|^2}{\mu(x)\mu(y)}
\]
and 
\[
\mathcal B_M(x,y):= \frac{|K_M(x,y)|^2}{|K_M(x,x)||K_M(y,y)|}=\frac{|K_M^\ast(x,y)|^2}{K_M^\ast (x,x) K_M^\ast(y,y)}.
\]
Let $d_B$ be the Bergman distance. By Kobayashi's theory \cite{Kobayashi}, we have the following fundamental inequality
\begin{equation}\label{eq:KI}
d_B(x,y)\ge \sqrt{1-\mathcal B_M(x,y)}.
\end{equation}
The goal of this section is to give an upper estimate for $\mathcal B_M(x,y)$ when $x\neq y$.

Let $\{h_j\}$ be a complete orthonormal basis of ${\mathcal H}$. Given $y\in{M}$ and a local coordinate $w$ near $y$, define a holomorphic differential by
\[
f_y(\cdot)=\sum_j \overline{h_j^\ast(y)}\,h_j(\cdot),
\]
where $h_j=h_j^\ast dw$. It follows that
\[
K_M(\cdot,y)=f_y(\cdot)\otimes d\bar{w}.
\]
For a function $\eta:M\rightarrow\mathbb(1,+\infty)$, we set
\[
A_\eta(x):=\{g_M(\cdot,x)\leq-\eta(x)\},\ \ \ \forall\,x\in{M}.
\]

\begin{lemma}\label{lm:upper_estimate_B}
If $A_\eta(x)\cap A_\eta(y)=\varnothing$, then there exists a numerical constant $C_1>0$ such that
\begin{equation}\label{eq:Bergman_upper_1}
\mathcal{B}_M(x,y)
\leq C_1e^{4\eta(x)}\frac{\int_{A_\eta(x)}|f_y|^2dV}{K_M^\ast(y,y)}.
\end{equation}
\end{lemma}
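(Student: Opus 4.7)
The inequality is equivalent to $|f_y^*(x)|^2 \le C_1 e^{4\eta(x)} K_M^*(x,x) \int_{A_\eta(x)} |f_y|^2\,dV$, and by the extremal property $|h^*(x)|^2 \le K_M^*(x,x)\,\|h\|_M^2$ valid for every $h\in\mathcal H(M)$, it is enough to construct a competitor $\tilde h\in\mathcal H(M)$ with $\tilde h^*(x) = f_y^*(x)$ and $\|\tilde h\|_M^2 \le C_1\,e^{4\eta(x)}\int_{A_\eta(x)}|f_y|^2\,dV$ for a \emph{numerical} constant $C_1$. The crucial point, distinguishing this from the analogous task on the previous page, is that Proposition~\ref{prop:DF} cannot be applied: its constant depends on $\lambda_1(M)$. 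Instead, we must exploit the intrinsic positive curvature of the canonical bundle of $M$ in the hyperbolic metric, which provides a genuinely universal H\"ormander constant.

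The plan is the standard $\bar\partial$-cutoff construction. Choose a smooth $\rho\colon(-\infty,0]\to[0,1]$ with $\rho\equiv 1$ on $(-\infty,-2\eta(x)]$, $\rho\equiv 0$ on $[-\eta(x),0]$, and put $\chi:=\rho(g_M(\cdot,x))$, so $\chi\equiv 1$ on $A_{2\eta(x)}(x)$, $\operatorname{supp}\chi\subset A_\eta(x)$, and $\operatorname{supp}\bar\partial\chi\subset A_\eta(x)\setminus A_{2\eta(x)}(x)$ (where $g_M(\cdot,x)\in[-2\eta(x),-\eta(x)]$). Set $v:=\bar\partial(\chi f_y)=\bar\partial\chi\wedge f_y$, and solve $\bar\partial u=v$ on the (Stein) Riemann surface $M$ using the plurisubharmonic weight $\varphi:=2g_M(\cdot,x)$. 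Because the Gauss curvature of $ds^2_{\rm hyp}$ equals $-1$, the canonical bundle $K_M$ endowed with the dual hyperbolic metric has Chern curvature form exactly $\omega_{\rm hyp}$, and the twisted curvature $\omega_{\rm hyp}+i\partial\bar\partial\varphi\ge\omega_{\rm hyp}$ (as $\varphi$ is psh). The Kodaira--Nakano--H\"ormander $L^2$-estimate for $(1,0)$-forms then yields, with a universal constant,
\[
\int_M |u|^2\, e^{-\varphi}\,dV \;\le\; \int_M |v|^2\, e^{-\varphi}\,dV.
\]
The $2\log|\cdot-x|$ singularity of $\varphi$ at $x$ (combined with the holomorphicity of $u$ near $x$, where $v\equiv 0$) forces $u^*(x)=0$, so $\tilde h:=\chi f_y-u\in\mathcal H(M)$ satisfies $\tilde h^*(x)=f_y^*(x)$.

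For the norm bound, since $g_M\le 0$ on $M$ we have $e^{-\varphi}\ge 1$ globally, while on $\operatorname{supp}\bar\partial\chi$ one has $-2g_M\le 4\eta(x)$ and hence $e^{-\varphi}\le e^{4\eta(x)}$. Consequently
\[
\|u\|_M^2 \;\le\; \int_M |u|^2 e^{-\varphi}\,dV \;\le\; \int_M|v|^2 e^{-\varphi}\,dV \;\le\; e^{4\eta(x)} \int_M |v|^2\,dV,
\]
and together with $\|\chi f_y\|_M^2\le\int_{A_\eta(x)}|f_y|^2\,dV$ the proof reduces to showing $\int_M|v|^2\,dV\lesssim \int_{A_\eta(x)}|f_y|^2\,dV$ with a numerical constant.

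This last estimate is the main obstacle, because $|\bar\partial\chi|^2 = |\rho'(g_M)|^2|\bar\partial g_M|^2$ is not pointwise bounded: $|\bar\partial g_M|$ blows up along thin level sets of the Green function. The resolution is to exploit the intrinsic coarea structure of $g_M$: the flux identity for the Green function gives $\int_{\{-\eta_2\le g_M\le -\eta_1\}}|\nabla g_M|^2\,dV = 2\pi(\eta_2-\eta_1)$, and combined with subharmonicity of $|f_y^*|^2$ in local charts and a Poisson--Jensen style integration-by-parts on the annular band, this controls the offending integral after a careful choice of $\rho$ with $|\rho'|\lesssim 1/\eta(x)$, absorbing both the $1/\eta(x)^2$ from $|\rho'|^2$ and the unbounded pointwise behaviour of $|\bar\partial g_M|^2$ into a universal constant. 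Modulo this technical step, $\|\tilde h\|_M^2\le C_1 e^{4\eta(x)} \int_{A_\eta(x)}|f_y|^2\,dV$, and the extremal inequality concludes the proof.
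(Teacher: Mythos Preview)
Your overall architecture is correct, and you rightly note that Proposition~\ref{prop:DF} (whose constant depends on $\lambda_1(M)$) is unavailable here. But the step you flag as ``the main obstacle'' is a genuine gap, and the sketch you offer for resolving it does not work. The flux identity indeed gives
\[
\int_{\{-2\eta(x)\le g_M\le -\eta(x)\}} |\nabla g_M|^2\,dV \;=\; 2\pi\,\eta(x),
\]
so with $|\rho'|\lesssim 1/\eta(x)$ you control $\int |\bar\partial\chi|^2\,dV$. What you need, however, is $\int |\bar\partial\chi|^2\,|f_y|^2\,dV \lesssim \int_{A_\eta(x)}|f_y|^2\,dV$, and there is no mechanism to separate the weight $|f_y|^2$ from the unbounded factor $|\nabla g_M|^2$. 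Coarea leaves a stray $|\nabla g_M|$ on each level set, and ``Poisson--Jensen style integration by parts'' does not produce the required pointwise or integral control; moreover $|f_y|^2$ is the hyperbolic pointwise norm $|f_y^\ast|^2/\mu$, which is not obviously subharmonic. Without a pointwise bound on $|\bar\partial\chi|$ the H\"ormander estimate against the fixed curvature $\omega_{\rm hyp}$ cannot close with a numerical constant.

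The paper avoids this difficulty entirely by using the Donnelly--Fefferman estimate (in the Berndtsson--Charpentier form) rather than the plain H\"ormander estimate. One takes the cutoff as a function of $\psi:=-\log(-g_M(\cdot,x))$ instead of $g_M(\cdot,x)$, namely $\chi=\kappa(\psi+\log\eta(x))$; this has exactly the same support properties as yours. Because $-g_M$ is positive and harmonic off $x$, one has $i\partial\bar\partial\psi \ge i\partial\psi\wedge\bar\partial\psi$, which is precisely the self-bounded-gradient hypothesis of Donnelly--Fefferman and yields, with a \emph{numerical} constant $C_1$,
\[
\int_M |u|^2 e^{-2g_M(\cdot,x)}\,dV \;\le\; C_1 \int_M |v|^2_{\,i\partial\bar\partial\psi}\, e^{-2g_M(\cdot,x)}\,dV.
\]
The point is that the right-hand side measures $v$ in the metric $i\partial\bar\partial\psi$, not $\omega_{\rm hyp}$. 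Since $v=\kappa'(\psi+\log\eta(x))\,\bar\partial\psi\wedge f_y$ and $|\bar\partial\psi|^2_{\,i\partial\bar\partial\psi}\le 1$, one gets the \emph{pointwise} bound $|v|^2_{\,i\partial\bar\partial\psi}\le \sup|\kappa'|^2\,|f_y|^2$, and the problematic $|\nabla g_M|^2$ never appears. From there the argument finishes exactly as you wrote.
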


\begin{proof}
Let $\kappa:\mathbb R\rightarrow [0,1]$ be a cut-off function such that $\kappa|_{(-\infty,-\log2]}=1$ and $\kappa|_{[0,+\infty)}=0$. Since $g_M(\cdot,x)$ is a  negative harmonic function on $M\backslash \{x\}$ which satisfies
\[
-i\partial\bar{\partial} \log (-g_M(\cdot,x))\ge i\partial \log (-g_M(\cdot,x))\wedge \bar{\partial} \log (-g_M(\cdot,x)),
\]
we infer from the Donnelly-Fefferman estimate (cf. \cite{DF83},  see also \cite{BC}) that there exists a solution of the equation
\[
\bar{\partial} u=f_y \bar{\partial}\kappa(-\log(-g_M(\cdot,x))+\log \eta(x))
\]
such that
\begin{eqnarray*}
&& \int_M |u|^2 e^{-2g_M(\cdot,x)} dV\\
&\leq & C_1 \int_M |f_y|^2 |\bar{\partial}\kappa(-\log(-g_M(\cdot,x))+\log \eta(x))|^2_{-i\partial\bar{\partial}\log(-g_M(\cdot,x))} e^{-2g_M(\cdot,x)}dV\\
&\leq& C_1 e^{4\eta(x)} \int_{A_\eta(x)} |f_y|^2dV
\end{eqnarray*}
for some generic numerical constant $C_1>0$. Set
\[
F:=f_y \kappa(-\log(-g_M(\cdot,x))+\log \eta(x))-u.
\]
Clearly,  we have $F\in \mathcal H$,  and since $g_M(\cdot,x)$ has a logarithmic pole at $x$, we have $u(x)=0$ so that $F(x)=f_y(x)$; moreover, 
\begin{eqnarray*}
\int_M|F|^2dV
&\leq& 2\int_{A_\eta(x)} |f_y|^2 dV+2\int_M |u|^2 dV\\
&\leq& \left(2+2C_1e^{4\eta(x)}\right) \int_{A_\eta(x)} |f_y|^2 dV
\end{eqnarray*}
since $g_M(\cdot,x)<0$. Thus we get
\[
|K_M(x,x)|\ge \frac{|F(x)|^2}{\|F\|^2}\ge \left(2+2C_1e^{4\eta(x)}\right)^{-1} \frac{|f_y(x)|^2}{\int_{A_\eta(x)} |f_y|^2 dV},
\]
so that 
\[
{\mathcal B}_M(x,y)\le \left(2+2C_1e^{4\eta(x)}\right) K_M^\ast(y,y)^{-1}\int_{A_\eta(x)} |f_y|^2 dV,
\]
from which the assertion immediately follows.
\end{proof}

From now on,  let us fix
\[
\eta(x):={C_0}\lambda_1(M)^{-1}|B_x|^{-1},
\]
so that $A_\eta(x)\subset B_x$ in view of \eqref{eq:including_Green}.  Set $2B_x=B_{2\widehat{r}_x}(x)$. We also need the following

\begin{lemma}\label{lm:Bergman_disc}
If $d(x,y)\geq 2(\widehat{r}_x+\widehat{r}_y)$, i.e., $2B_x\cap 2B_y\neq\varnothing$, then for every $0<\tau<\lambda_1(M)/9$ there exists a constant $C=C_\tau>0$ such that
\[
\int_{B_x} |f_y|^2 dV  \le C  K_{B_y}^\ast(y,y)^{1/2} K_M^\ast(y,y)^{1/2} \widehat{r}_x^{\,-1}e^{\sqrt{\tau} [\rho(y)-\rho(x)]}.
\]
\end{lemma}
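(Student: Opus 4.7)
The plan is to realize $\int_{B_x}|f_y|^2\,dV$ by duality as the value at $y$ of a distinguished $\bar\partial$-solution and then to invoke Proposition~\ref{prop:DF} with a weight linear in $\rho$ to import exponential decay. Pick a smooth cutoff $\chi$ with $\chi\equiv 1$ on $B_x$, $\mathrm{supp}\,\chi\subset 2B_x$, and $|\bar\partial\chi|\lesssim\widehat r_x^{-1}$. The hypothesis $d(x,y)\ge 2(\widehat r_x+\widehat r_y)$ forces $2B_x\cap 2B_y=\varnothing$, so $\chi(y)=0$. Set $g:=\chi f_y$ and $v:=\bar\partial g=\bar\partial\chi\wedge f_y$, which is smooth and supported in the annulus $2B_x\setminus B_x$. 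Let $u$ be the $L^2$-minimal solution of $\bar\partial u=v$, so that $Pg:=g-u\in\mathcal H$ is the Bergman projection of $g$. Expanding $f_y$ in an orthonormal basis yields the reproducing identity
\[
(Pg)^\ast(y)=\tfrac{i}{2}\int_M g\wedge\overline{f_y}=\int_M\chi\,|f_y|^2\,dV\ge\int_{B_x}|f_y|^2\,dV,
\]
while $g(y)=0$ forces $(Pg)^\ast(y)=-u^\ast(y)$; hence $\int_{B_x}|f_y|^2\,dV\le|u^\ast(y)|$.

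Because $v$ is supported away from $B_y$, $u$ is holomorphic on $B_y$; lifting to the universal cover, where $B_y$ corresponds to a Euclidean disc of radius comparable to $\widehat r_y$, the classical sub-mean-value inequality for the coefficient of $u$ yields $|u^\ast(y)|^2\lesssim\widehat r_y^{-2}\int_{B_y}|u|^2\,dV$. Choose the Lipschitz weight $\varphi(z):=-2\sqrt\tau\,\rho(z)$; since $\rho$ is $1$-Lipschitz one has $|\partial\varphi|^2\le\tau<\lambda_1(M)/9$, so Proposition~\ref{prop:DF} applies and gives $\|u\|_{-\varphi}^2\le C_\tau\|v\|_{-\varphi}^2$. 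Estimating
\[
\|v\|_{-\varphi}^2\le e^{\sup_{2B_x}\varphi}\int_M|v|^2\,dV\le C\,\widehat r_x^{-2}\,e^{\sup_{2B_x}\varphi}\,K_M^\ast(y,y)
\]
(using $|v|\lesssim\widehat r_x^{-1}|f_y|$ and $\|f_y\|_M^2=K_M^\ast(y,y)$) and then
\[
\int_{B_y}|u|^2\,dV\le e^{-\inf_{B_y}\varphi}\,\|u\|_{-\varphi}^2\le C_\tau\,\widehat r_x^{-2}\,K_M^\ast(y,y)\,e^{\sup_{2B_x}\varphi-\inf_{B_y}\varphi},
\]
together with the direct calculation $\sup_{2B_x}\varphi-\inf_{B_y}\varphi\le 2\sqrt\tau(\rho(y)-\rho(x))+O_\tau(1)$ (valid since $\widehat r_x,\widehat r_y\le 1$), produces
\[
|u^\ast(y)|\le C_\tau\,\widehat r_y^{-1}\widehat r_x^{-1}\,K_M^\ast(y,y)^{1/2}\,e^{\sqrt\tau(\rho(y)-\rho(x))}.
\]

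Finally, the elementary lower bound $K_{B_y}^\ast(y,y)\gtrsim\widehat r_y^{-2}$---valid because a hyperbolic disc of radius $\widehat r_y\le 1$ is, up to bounded distortion, a Euclidean disc of the same radius with Bergman kernel $1/(\pi\widehat r_y^2)$ at the center---replaces $\widehat r_y^{-1}$ by $K_{B_y}^\ast(y,y)^{1/2}$ and yields the asserted inequality. The principal difficulty is the duality step: one must convert the soft quantity $\int_{B_x}|f_y|^2\,dV$ into the value at $y$ of a single canonical holomorphic object, so that Proposition~\ref{prop:DF}---with $\varphi$ chosen precisely so that the factor $e^{\sup_{2B_x}\varphi-\inf_{B_y}\varphi}$ carries the exponent $2\sqrt\tau(\rho(y)-\rho(x))$---can be invoked to furnish the desired exponential decay; once this identification is in place, the remaining pieces (sub-mean-value in a chart on the universal cover and the Bergman-kernel lower bound on a small disc) are routine.
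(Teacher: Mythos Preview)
Your proof is correct and follows essentially the same route as the paper's: the same cutoff supported in $2B_x$, the same Bergman-projection identity reducing $\int_{B_x}|f_y|^2\,dV$ to $|u_0^\ast(y)|$ for the $L^2$-minimal solution, and the same weight $\varphi=-2\sqrt\tau\,\rho$ fed into Proposition~\ref{prop:DF}. The only cosmetic difference is that the paper bounds $|u_0^\ast(y)|^2$ directly by $K_{B_y}^\ast(y,y)\int_{B_y}|u_0|^2$ via the extremal property of the Bergman kernel, whereas you detour through a sub-mean-value estimate $|u_0^\ast(y)|^2\lesssim\widehat r_y^{-2}\int_{B_y}|u_0|^2$ and then invoke $K_{B_y}^\ast(y,y)\gtrsim\widehat r_y^{-2}$ to convert back---the paper's version is slightly cleaner and avoids the extra step.
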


\begin{proof}
Let $\chi:\mathbb R\rightarrow [0,1]$ be a cut-off function such that $\chi|_{(-\infty,1]}=1$, $\chi|_{[2,\infty)}=0$ and $|\chi'|\leq 2$. Then we have 
\begin{equation}\label{eq:B-Integral}
\frac{i}2\int_{B_x} f_y\wedge \overline{f_y}\le  \frac{i}2\int_M \chi(\rho_x/\widehat{r}_x) f_y\wedge \overline{f_y},
\end{equation}   
where $\rho_x:=d(\cdot,x)$. The well-known property of the Bergman projection yields
\[
\frac{i}2\int_M \chi (\rho_x/\widehat{r}_x) f_y\wedge \overline{K_M(\cdot,a)}  =  \chi (\rho_x(a)/\widehat{r}_x) f_y(a) -u_0(a),\ \ \ \forall\,a\in{M},
\]
where $u_0$ is the $L^2$ minimal solution of the equation
\[
\bar{\partial}u=v:=\bar{\partial}(\chi (\rho_x/\widehat{r}_x) f_y).
\]
In particular,
\begin{equation}\label{eq:B-Projection}
\frac{i}{2} \int_{M} \chi (\rho_x/\widehat{r}_x) f_y\wedge \overline{f_y}=-u^\ast_0(y),
\end{equation}
for $\chi (\rho_x/\widehat{r}_x)|_{2B_y}=0$. Fix $\tau<\lambda_1(M)/9$. Put
\[
\varphi=-2\sqrt{\tau} \rho.
\]
Clearly, $\varphi$ is a  Lipschitz continuous function on $M$ which satisfies
\[
|\partial \varphi|^2=|\nabla \varphi|^2/4\le \tau\ \ \ {\rm a.e.}
\]
By virtue of Proposition \ref{prop:DF},  we have
\begin{eqnarray*}
\int_M |u_0|^2e^{\varphi}dV
&\leq& {\rm const}_\tau  \int_M |v|^2e^{\varphi}dV\\
&\leq& {\rm const}_\tau \widehat{r}_x^{\,-2} \int_{2B_x\backslash B_x} |f_y|^2e^{-2\sqrt{\tau} \rho}dV\\
&\leq& {\rm const}_\tau\,\widehat{r}_x^{\,-2}e^{-2\sqrt{\tau} \rho(x)}\int_M |f_y|^2dV\\
&\leq& {\rm const}_\tau\,\widehat{r}_x^{\,-2}e^{-2\sqrt{\tau}\rho(x)}K_M^\ast(y,y).
\end{eqnarray*}
Since $u_0$ is holomorphic in $B_y$, it follows  that 
\begin{eqnarray*}
|u_0(y)|^2
&\leq&  |K_{B_y}(y,y)|\int_{B_y}|u_0|^2\\
&\leq& {\rm const}_\tau e^{2\sqrt{\tau} \rho(y)} |K_{B_y}(y,y)| \int_{B_y}|u_0|^2 e^{\varphi}\\
&\leq& {\rm const}_\tau |K_{B_y}(y,y)| K_M^\ast(y,y)\,\widehat{r}_x^{\,-2}e^{2\sqrt{\tau}[\rho(y)-\rho(x)]}.
\end{eqnarray*}
In other words,
\begin{equation}\label{eq:PointwiseBound}
|u^\ast_0(y)|\leq {\rm const}_\tau K_{B_y}^\ast(y,y)^{1/2} K_M^\ast(y,y)^{1/2} \widehat{r}_x^{\,-1}e^{\sqrt{\tau} [\rho(y)-\rho(x)]}.
\end{equation}
This combined with (\ref{eq:B-Integral}) and (\ref{eq:B-Projection}) yields the conclusion.
\end{proof}

By Lemma \ref{lm:upper_estimate_B} and Lemma \ref{lm:Bergman_disc},  we obtain
\begin{equation}\label{eq:upper_B_1}
|\mathcal{B}_M(x,y)|\leq C_\tau e^{4\eta(x)}\frac{K_{B_y}^\ast(y,y)^{1/2}}{K_M^\ast(y,y)^{1/2}}\widehat{r}_x^{\,-1}e^{\sqrt{\tau}(\rho(y)-\rho(x))}.
\end{equation}

The main result of this section is the following

\begin{proposition}\label{prop:Key}
If $B_x\cap{B_y}=\varnothing$, then for every $0<\tau<\lambda_1(M)/9$ there exists a constant $C>0$ such that
\begin{equation}\label{eq:upper_B}
|\mathcal{B}_M(x,y)|\leq C \widehat{r}_x^{\,-1} e^{4\eta(x)+2\eta(y)}e^{\sqrt{\tau}(\rho(y)-\rho(x))}.
\end{equation}
\end{proposition}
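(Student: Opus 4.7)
The plan is to combine the previously derived inequality \eqref{eq:upper_B_1} with a comparison of the on-diagonal Bergman kernels of $B_y$ and $M$ at $y$. Specifically, to pass from \eqref{eq:upper_B_1} to \eqref{eq:upper_B} it suffices to establish
\[
K_{B_y}^\ast(y,y) \leq C\, e^{4\eta(y)}\, K_M^\ast(y,y)
\]
for some numerical constant $C>0$; squaring turns the unwanted ratio $K_{B_y}^\ast(y,y)^{1/2}/K_M^\ast(y,y)^{1/2}$ appearing in \eqref{eq:upper_B_1} into $e^{2\eta(y)}$, and \eqref{eq:upper_B} follows at once.

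To obtain the displayed comparison, I would repeat the extension trick used in the proof of Lemma \ref{lm:upper_estimate_B}, with the roles reversed: instead of starting from the reproducing differential $f_y$ on $M$ and cutting it against $g_M(\cdot,x)$, I start from an arbitrary holomorphic differential $h$ on $B_y$ and cut it against $g_M(\cdot,y)$. Concretely, set
\[
\widetilde{F} := \kappa\bigl(-\log(-g_M(\cdot,y)) + \log \eta(y)\bigr)\, h,
\]
which by choice of the cut-off $\kappa$ is supported in $A_\eta(y) \subset B_y$ and therefore extends by zero to a smooth section on all of $M$. The Donnelly-Fefferman estimate with weight $-2 g_M(\cdot,y)$, exactly as applied in Lemma \ref{lm:upper_estimate_B}, then produces a solution $u$ of $\bar\partial u = \bar\partial \widetilde{F}$ satisfying
\[
\int_M |u|^2\, e^{-2 g_M(\cdot,y)}\, dV \leq C_1\, e^{4\eta(y)} \int_{B_y} |h|^2\, dV.
\]
The corrected form $G := \widetilde{F} - u$ is holomorphic on $M$, lies in $\mathcal H(M)$, and $u(y)=0$ because of the logarithmic pole of $g_M(\cdot,y)$ (the weight $e^{-2g_M(\cdot,y)}$ is non-integrable near $y$), so $G(y) = h(y)$. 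Moreover
\[
\|G\|_M^2 \leq 2 \int_{B_y} |h|^2\, dV + 2 \int_M |u|^2\, dV \leq \bigl(2 + 2 C_1 e^{4\eta(y)}\bigr) \int_{B_y} |h|^2\, dV.
\]
Taking the supremum over $h \in \mathcal H(B_y)$ and invoking the extremal characterization of the Bergman kernel yields the required comparison.

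Inserting this estimate into \eqref{eq:upper_B_1} immediately gives \eqref{eq:upper_B}. The hypothesis $B_x \cap B_y = \varnothing$ enters twice in the background: it guarantees the hypothesis $A_\eta(x) \cap A_\eta(y) = \varnothing$ of Lemma \ref{lm:upper_estimate_B} (since $A_\eta(x)\subset B_x$ and $A_\eta(y)\subset B_y$ by \eqref{eq:including_Green}), and it is what makes the cut-off in Lemma \ref{lm:Bergman_disc} disjoint from $B_y$ so that $u_0$ is holomorphic there. No new analytic ingredient beyond the tools already in §3 is needed; the main point to get right is bookkeeping of the extension argument above so that the exponent $e^{4\eta(y)}$ comes out with the correct coefficient. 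The principal obstacle is conceptual rather than technical: recognizing that a single extra application of the Donnelly-Fefferman trick—now centred at $y$ instead of $x$—is what converts the local Bergman kernel $K_{B_y}^\ast(y,y)$ into the global one $K_M^\ast(y,y)$ up to the expected exponential loss.
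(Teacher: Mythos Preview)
Your proposal is correct and follows essentially the same route as the paper: the paper isolates your comparison $K_{B_y}^\ast(y,y)\le C e^{4\eta(y)}K_M^\ast(y,y)$ as Lemma~\ref{lm:Bergman_disc_compare} and proves it by exactly the Donnelly--Fefferman extension argument you describe (choosing the extremal $h=\tilde f_y$ rather than an arbitrary $h$, which is of course equivalent). Combining this with \eqref{eq:upper_B_1} gives \eqref{eq:upper_B}, just as you say.
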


By virtue of  \eqref{eq:upper_B_1},  it suffices to verify the following

\begin{lemma}\label{lm:Bergman_disc_compare}
There exists a numerical constant $C_2>0$ such that 
$$
|K_M(y,y)|\geq C_2^{-1}e^{-4\eta(y)}|K_{B_y}(y,y)|.
$$
\end{lemma}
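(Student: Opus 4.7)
The plan is to extend an extremal holomorphic differential from the small disc $B_y$ to all of $M$ with a controlled $L^2$-norm, via the same Donnelly--Fefferman construction used in the proof of Lemma \ref{lm:upper_estimate_B}. First I would pick $h \in \mathcal{H}(B_y)$ normalised so that $\|h\|_{B_y}^2 = 1$ and $|h^\ast(y)|^2 = K^\ast_{B_y}(y,y)$, i.e.\ $|h(y)|^2 = |K_{B_y}(y,y)|$ in the hyperbolic pointwise normalisation; this $h$ realises the extremal value of the Bergman kernel of $B_y$ at the diagonal point $y$.

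Next I would multiply $h$ by the very cutoff already used in Lemma \ref{lm:upper_estimate_B}, namely
\[
\chi := \kappa\bigl(-\log(-g_M(\cdot,y)) + \log \eta(y)\bigr),
\]
which equals $1$ on $\{g_M(\cdot,y)\le -2\eta(y)\}$ and vanishes outside $A_\eta(y)$. Since $A_\eta(y)\subset \overline{B}_y$ by \eqref{eq:including_Green}, the product $\chi h$ extends smoothly by zero to all of $M$. I would then apply Donnelly--Fefferman to solve $\bar\partial u = h\,\bar\partial \chi$ with the weight $e^{-2g_M(\cdot,y)}$; the same computation as in Lemma \ref{lm:upper_estimate_B}, using
\[
-i\partial\bar\partial\log(-g_M(\cdot,y)) \ge i\partial \log(-g_M(\cdot,y))\wedge\bar\partial\log(-g_M(\cdot,y))
\]
and the bound $-g_M(\cdot,y)\le 2\eta(y)$ on the support of $\bar\partial \chi$, yields
\[
\int_M |u|^2 e^{-2g_M(\cdot,y)}\,dV \le C\, e^{4\eta(y)} \int_{A_\eta(y)} |h|^2\,dV \le C\, e^{4\eta(y)}.
\]

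Setting $F := \chi h - u$, we obtain a holomorphic differential on $M$; the finiteness of the weighted integral, together with the logarithmic pole of $g_M(\cdot,y)$ at $y$, forces $u(y)=0$, so $F(y) = h(y)$. Since $g_M(\cdot,y)\le 0$ we have $e^{-2g_M(\cdot,y)}\ge 1$, whence
\[
\|F\|_M^2 \le 2\|h\|_{B_y}^2 + 2\|u\|_M^2 \le 2 + 2C e^{4\eta(y)} \le C_2\, e^{4\eta(y)}.
\]
The extremal characterization of the Bergman kernel then gives
\[
|K_M(y,y)| \ge \frac{|F(y)|^2}{\|F\|_M^2} \ge \frac{|h(y)|^2}{C_2\, e^{4\eta(y)}} = \frac{|K_{B_y}(y,y)|}{C_2\, e^{4\eta(y)}},
\]
which is the claimed estimate.

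Since every step is a direct analogue of the argument already executed in Lemma \ref{lm:upper_estimate_B}, I do not anticipate any serious obstacle. The only small point to watch is that $h\,\bar\partial\chi$ is supported on the annular region $\{\eta(y)\le -g_M(\cdot,y)\le 2\eta(y)\}\subset B_y$, which is exactly what is needed both to apply the Donnelly--Fefferman estimate with a uniform bound on $e^{-2g_M(\cdot,y)}$ and to guarantee that $\chi h$ coincides with $h$ in a neighbourhood of $y$.
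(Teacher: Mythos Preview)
Your proposal is correct and follows exactly the same approach as the paper's proof: both choose an extremal element of $\mathcal{H}(B_y)$, cut it off with $\kappa(-\log(-g_M(\cdot,y))+\log\eta(y))$, solve $\bar\partial$ via the Donnelly--Fefferman estimate with weight $e^{-2g_M(\cdot,y)}$, and conclude by the extremal characterisation of $K_M(y,y)$. If anything, you have spelled out a couple of points (that $A_\eta(y)\subset\overline{B}_y$ makes $\chi h$ globally defined, and why $u(y)=0$) slightly more explicitly than the paper does.
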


\begin{proof}
Take $\tilde{f}_y\in \mathcal H(B_y)$ such that $|\tilde{f}_y(y)|^2=|K_{B_y}(y)|$ and $\|\tilde{f}_y\|=1$. Let $\kappa$ be the same cut-off function as in Lemma \ref{lm:upper_estimate_B}. Then a similar application of the Donnelly-Fefferman estimate yields a solution of the equation
\[
\bar{\partial} u=\tilde{f}_y \bar{\partial}\kappa(-\log(-g_M(\cdot,y))+\log \eta(y)),
\]
which satisfies
\begin{eqnarray*}
\int_M |u|^2 e^{-2 g_M(\cdot,y)} dV
&\leq&  C_3 e^{4\eta(y)} \int_{B_y}|\tilde{f}_y|^2dV=C_3 e^{4 \eta(y)}
\end{eqnarray*}
for suitable numerical constant $C_3>0$.  
Set
\[
\widetilde{F}:=\tilde{f}_y \kappa(-\log(-g_M(\cdot,y))+\log \eta(y))-u.
\]
Clearly, we have $\widetilde{F}\in \mathcal H$, $\widetilde{F}(y)=\tilde{f}_y(y)$ and
\begin{eqnarray*}
\int_M |\widetilde{F}|^2 dV
& \leq & 2\int_{B_y} |\tilde{f}_y|^2 dV+2\int_M |u|^2 dV\\
& \leq& 2+2C_3 e^{4\eta(y)}.
\end{eqnarray*}
Thus 
\[
|K_M(y,y)|\ge \frac{|\widetilde{F}(x)|^2}{\|\widetilde{F}\|^2}\ge \left(2+2C_3 e^{4\eta(y)}\right)^{-1} |K_{B_y}(y,y)|.\qedhere
\]
\end{proof}

\section{Proof of Theorem \ref{th:Main}}
Let $\varpi:\mathbb{D}\rightarrow M$ be the universal covering mapping and $\widetilde{x}\in\varpi^{-1}(x)$.  Recall that $\varpi(B_{\widehat{r}_x}(\widetilde{x}))=B_{\widehat{r}_x}(x)=B_x$. Thus
\[
|B_x|=|B_{\widehat{r}_x}(\widetilde{x})|=|B_{\widehat{r}_x}(0)|=4\pi\sinh^2(\widehat{r}_x/2)
\geq \pi\widehat{r}_x^{\,2}.
\]
Suppose that \eqref{eq:injectivity_radius} holds with $c_0>\sqrt{12C_0/\pi}$. It follows that $\widehat{r}_x\geq c_0\lambda_1(M)^{-3/4}\rho(x)^{-1/2}$ holds for $\rho(x)\ge R\gg 1$. Moreover,
\[
\eta(x)=C_0\lambda_1(M)^{-1}|B_x|^{-1}\leq C_0\pi^{-1}\lambda_1(M)^{-1}\widehat{r}_x^{\,-2}<\frac{1}{12}\lambda_1(M)^{1/2}\rho(x).
\]
Thus we may choose $\tau<\lambda_1(M)/9$ such that 
\[
\sqrt{\tau} \rho(x) - 4\eta(x)-\log{\widehat{r}_x^{\,-1}} \geq \varepsilon \rho(x)
\]
for suitable constant $\varepsilon>0$. Since $\sqrt{\tau}\rho(y)+ 2\eta(y) \leq \beta \rho(y)$ for some constant $\beta>0$, it follows from (\ref{eq:upper_B}) that
\[
\mathcal B_M(x,y)\lesssim e^{\beta\rho(y)-\varepsilon \rho(x)}\le 1/2
\]
whenever $\rho(y)\ge R=R(\varepsilon,\beta)\gg 1$ and $\rho(y)< \frac{\varepsilon}{2\beta}\cdot \rho(x)$. Thus
\[
d_B(x,y) \ge \sqrt{1-\mathcal B_M(x,y)}\ge \frac{\sqrt{2}}2.
\]
Now fix $x\in M$ with $\rho(x)\gg 1$. Let $c$ be a piece-wise smooth curve which joints $x_0$ to $x$. We may choose a finite number of points $\{x_k\}_{k=1}^n\subset c$ with the following order
\[
x_0\rightarrow x_1\rightarrow x_2\rightarrow \cdots \rightarrow x_n,
\]
such that 
\[
\rho(x_k)=\frac{\varepsilon}{2\beta} \cdot \rho(x_{k+1})\ \ \ \text{and}\ \ \ \rho(x)\le \frac{2\beta}{\varepsilon} \cdot \rho(x_n).
\]
It is easy to see that 
\[
n\asymp \log \rho(x_n)\gtrsim \log [1+\rho(x)]
\]
where the implicit constants are independent of the choice of $c$. It follows that the Bergman length $|c|_B$ of $c$ satisfies
\[
|c|_B \ge \sum_{k=1}^{n-1} d_B(x_k,x_{k+1}) \gtrsim n  \gtrsim \log [1+\rho(x)],
\]
from which the assertion immediately follows.

\end{document}